\numberwithin{equation}{section}
\providecommand{\I}{\mathbf{I}}
\renewcommand{\r}{\mathbf{r}}
\providecommand{\x}{\mathbf{x}}
\providecommand{\X}{\mathbf{X}}
\providecommand{\y}{\mathbf{y}}
\providecommand{\z}{\mathbf{z}}
\providecommand{\zero}{\mathbf{0}}
\providecommand{\lam}{\lambda}
\providecommand{\veps}{\varepsilon}
\providecommand{\bb}{\boldsymbol{\beta}}
\providecommand{\bh}{\widehat{\beta}}
\providecommand{\bbh}{\widehat{\boldsymbol{\beta}}}
\providecommand{\bvep}{\boldsymbol{\varepsilon}}
\providecommand{\sh}{\hat{\sigma}}
\providecommand{\tb}{\widetilde{\beta}}
\providecommand{\tbb}{\widetilde{\boldsymbol{\beta}}}
\providecommand{\bbj}{\boldsymbol{\beta}_{-j}}
\providecommand{\bbhj}{\widehat{\boldsymbol{\beta}}_{-j}}
\providecommand{\rr}{\tilde{\mathbf{r}}}
\providecommand{\Xj}{\mathbf{X}_{-j}}
\providecommand{\yy}{\tilde{\mathbf{y}}}
\renewcommand{\Pr}{\mathbb{P}}
\providecommand{\Ex}{\mathbb{E}}
\providecommand{\var}{\textrm{Var}}
\providecommand{\cor}{\textrm{Cor}}
\providecommand{\Norm}{\textrm{N}}
\providecommand{\IID}{\overset{\text{iid}}{\sim}}
\providecommand{\inD}{\overset{\text{d}}{\longrightarrow}}
\providecommand{\FDR}{\textrm{FDR}}
\providecommand{\ind}{\perp\!\!\!\perp}
\providecommand{\abs}[1]{\left\lvert#1\right\rvert}
\providecommand{\norm}[1]{\left\lVert#1\right\rVert}
\providecommand{\al}[2]{\begin{align}\label{#1}#2\end{align}}
\providecommand{\as}[1]{\begin{align*}#1\end{align*}}
\providecommand{\als}[2]{\begin{align}\label{#1}\begin{split}#2\end{split}\end{align}}
\renewcommand{\abstract}[1]{
 \centerline{
 \begin{minipage}{0.7\linewidth}
 \hrule
 \vskip 0.1in
  \begin{center}
    {\bf Abstract}
  \end{center}
  #1
 \vskip 0.1in
 \hrule
 \end{minipage}}
 \vskip 0.3in} 
\definecolor{shadecolor}{rgb}{.9,.9,.9}
\newenvironment{kframe}{%
 \def\at@end@of@kframe{}%
 \ifinner\ifhmode%
  \def\at@end@of@kframe{\end{minipage}}%
  \begin{minipage}{\columnwidth}%
 \fi\fi%
 \def\FrameCommand##1{\hskip\@totalleftmargin \hskip-\fboxsep
 \colorbox{shadecolor}{##1}\hskip-\fboxsep
     \hskip-\linewidth \hskip-\@totalleftmargin \hskip\columnwidth}%
 \MakeFramed {\advance\hsize-\width
   \@totalleftmargin\z@ \linewidth\hsize
   \@setminipage}}%
 {\par\unskip\endMakeFramed%
 \at@end@of@kframe}
\newenvironment{code}
 {\VerbatimEnvironment \begin{small}\begin{kframe}\begin{Verbatim}}
 {\end{Verbatim}\end{kframe}\end{small}}
\newtheorem{theorem}{Theorem}
\providecommand{\cA}{\mathcal{A}}
\providecommand{\cM}{\mathcal{M}}
\providecommand{\cN}{\mathcal{N}}
\providecommand{\cS}{\mathcal{S}}
\providecommand{\FD}{\textrm{FD}}
\providecommand{\mFDR}{\textrm{mFDR}}
\title{Marginal false discovery rates for penalized regression models}
\author{Patrick Breheny\\Department of Biostatistics\\University of Iowa}
\date{\today}
\begin{document}

\maketitle

\abstract{Penalized regression methods are an attractive tool for high-dimensional data analysis, but their widespread adoption has been hampered by the difficulty of applying inferential tools. In particular, the question ``How reliable is the selection of those features?'' has proved difficult to address. In part, this difficulty arises from defining false discoveries in the classical, fully conditional sense, which is possible in low dimensions but does not scale well to high-dimensional settings. Here, we consider the analysis of marginal false discovery rates for penalized regression methods. Restricting attention to the marginal FDR permits straightforward estimation of the number of selections that would likely have occurred by chance alone, and therefore provides a useful summary of selection reliability. Theoretical analysis and simulation studies demonstrate that this approach is quite accurate when the correlation among predictors is mild, and only slightly conservative when the correlation is stronger. Finally, the practical utility of the proposed method and its considerable advantages over other approaches are illustrated using gene expression data from The Cancer Genome Atlas and GWAS data from the Myocardial Applied Genomics Network.}

\section{Introduction}

Penalized regression is an attractive methodology for dealing with high-dimensional data where classical likelihood approaches to modeling break down.  However, its widespread adoption has been hindered by a lack of inferential tools.  In particular, penalized regression is very useful for variable selection, but the question ``How reliable are those selections?'' has proved difficult to address.  As I will argue in this paper, this difficulty is partially due to the complexity of defining a ``false discovery'' in the penalized regression setting.

In this paper, I will focus mainly on the lasso for linear regression models \citep{Tibshirani1996}, although the idea is very general and can be extended to a variety of other regression models and penalty functions.  In particular, suppose we use the lasso to select 10 variables from a pool of potentially important features.  This paper addresses the question, ``How many of those selections would likely have occurred by chance alone?''

There has been a fair amount of recent work on the idea of hypothesis testing in the high-dimensional penalized regression setting.  A comprehensive review is beyond the scope of this paper, but it is worth introducing two approaches to which I will compare the method proposed in this paper.  One approach is to split the sample into two parts, using the first part for variable selection and the second part for hypothesis testing.  This idea was first introduced in \citet{Wasserman2009}, who studied the problem using a single split of the data set.  \citet{Meinshausen2009a} extended this approach by considering multiple random splits and combining the results.  \citet{Dezeure2015} provides a comprehensive review of this approach and the details involved, along with procedures for limiting the overall false discovery rate through this form of testing.

An alternative approach is to test the significance of adding a variable along the solution path as the degree of penalization is relaxed, conditional on the other variables already included in the model.  Several tests fall into this category, including the covariance test \citep{Lockhart2014}, the spacing test, and an exact test \citep[][which also introduces the spacing test]{Tibshirani2016}.  The details of the tests differ, but all involve modifying classical tests for the significance of an added variable by conditioning on the fact that the added variable was not prespecified, but rather selected from a pool of potential variables.  \citet{GSell2016} provide an important extension to this work by deriving a rule, forwardStop, for selecting the stopping point along this sequence of sequential tests so as to preserve a specified false discovery rate.

False discoveries are straightforward to define in single-variable hypothesis testing: a false discovery is one that is independent of the outcome.
In regression models, however, this idea is complicated by the consideration of various kinds of conditional independence.
Typically, in regression a feature $X_j$ is considered a false discovery if it is independent of the outcome $Y$ given all other features; symbolically, if $X_j \ind Y | \{X_k\}_{k \neq j}$.
This is the perspective adopted by most work in this area, including sample splitting; here, we will refer to this as the {\em fully conditional} perspective.
The pathwise approaches use a different definition, which we will call the {\em pathwise conditional} perspective.
Letting $\cM(\lam)$ denote the set of variables with nonzero coefficients in the model at the point in the path where feature $j$ is selected, in these approaches a feature $j$ is considered a false discovery if $X_j \ind Y | \{X_k:k \in \cM_{j-}\}$.
Here, $\lam$ indexes the the various models along the lasso solution path; see \eqref{eq:lasso} for an exact definition.

This paper introduces a weaker definition of false discovery than those considered in the existing literature.
Rather than the conditional definitions given in the previous paragraph, here we consider a marginal perspective in which a selected feature $j$ is false if it is marginally independent of the outcome, the definition used in single-feature testing: $X_j \ind Y$.
Adopting a simpler definition makes it possible to estimate the expected number of false discoveries as well as their rate, which we call the {\em marginal false discovery rate}, or mFDR.
Our goal here is not to argue that the marginal FDR is always superior to either of the conditional FDRs; clearly, there are times where a conditional FDR is an important quantity of interest.
However, the mFDR is also an interesting and useful summary of feature selection accuracy.
This is especially true in high dimensions where, as we will see, attempting to control a conditional false discovery rate is often hopeless.

\section{Marginal false discovery rates}
\label{Sec:fir}

Consider the linear model with normally distributed errors:
\as{\y &= \X\bb + \bvep\\
  \veps_i &\IID \Norm(0, \sigma^2),}
where $\y$ denotes the response and $\X$ the $n \times p$ design matrix, with $n$ denoting the number of independent observations and $p$ the number of features.
Without loss of generality, we assume throughout that the response and covariates are centered so that the intercept term can be ignored, and that the features are standardized so that $\tfrac{1}{n}\sum_i x_{ij}^2 = 1$ for all $j$.
We are interested in studying the lasso estimator $\bbh$, defined as the quantity that minimizes
\al{eq:lasso}{\frac{1}{2n}\norm{\y-\X\bb}_2^2 + \lam\norm{\bb}_1,}
where $\norm{\bb}_1 = \sum_j|\beta_j|$.  Here, the least-squares loss is used to measure the fit of the model and the $L_1$ norm is used to penalize large values of $\bb$, with $\lam$ controlling the tradeoff between the two.

An appealing aspect of using the lasso to estimate $\bb$ is that the resulting estimates are sparse: some coefficients will be nonzero, but for many, $\bh_j=0$.  In this paper, we say that a feature for which $\bh_j \neq 0$ has been ``selected.''  Note that $\bbh$ changes with $\lam$ although we will suppress this in the notation; or a large enough value of $\lam$, $\bh_j=0$ for all $j$, but as we lower $\lam$, more variables will be included.  To address the expected number of features included in a lasso model by chance alone, we begin by considering the orthonormal case, then turn our attention to the general case.

\subsection{Orthonormal case}
\label{Sec:ortho}

For a given value of the regularization parameter $\lam$, let $\r=\y - \X\bbh$ denote the residuals.  The lasso \eqref{eq:lasso} is a convex optimization problem, so the Karush-Kuhn-Tucker (KKT) conditions are both necessary and sufficient for any solution $\bbh$:
\as{\frac{1}{n}\x_j'\r = \lam\,\textrm{sign}(\bh_j) \hspace{1cm} &\textrm{for all }\bh_j \neq 0\\
\frac{1}{n}\abs{\x_j'\r} \leq \lam \hspace{2.4cm} &\textrm{for all }\bh_j = 0.}

Letting $\Xj$ and $\bbj$ denote the portions of the design matrix and coefficient vector that remain after removing the $j$th feature, let $\r_j = \y-\Xj\bbhj$ denote the partial residuals with respect to feature $j$.  The KKT conditions thus imply that
\als{eq:kkt}{
  \frac{1}{n}\abs{\x_j'\r_j} &  >  \lam \qquad \textrm{for all }\bh_j \neq 0\\
  \frac{1}{n}\abs{\x_j'\r_j} &\leq \lam \qquad \textrm{for all }\bh_j = 0}
and therefore the probability that variable $j$ is selected is
\as{\Pr(\bh_j \neq 0) = \Pr\left(\frac{1}{n}\abs{\x_j'\r_j} > \lam \right).}

This indicates that if we are able to characterize the distribution of $\tfrac{1}{n}\x_j'\r_j$ under the null, we can estimate the number of false discoveries in the model.  Indeed, this is straightforward in the case of orthonormal design ($\frac{1}{n}\X'\X=\I$):
\als{eq:partial}{\frac{1}{n}\x_j'\r_j &= \frac{1}{n}\x_j'(\y-\X\bb + \X_{-j}\bb_{-j} + \x_j\beta_j - \X_{-j}\bbh_{-j})\\
  &=\frac{1}{n}\x_j'\bvep + \beta_j\\
  &\sim N(\beta_j, \sigma^2/n) .}
Thus, if $\beta_j=0$, we have
\as{\Pr\left(\frac{1}{n}\abs{\x_j'\r_j} > \lam \right) &= 2\Phi(-\lam\sqrt{n}/\sigma).}

These results are related to the expected number of false discoveries in the following theorem, the proof of which follows directly from the above by summing $\Pr(\bh_j \neq 0)$ over the set of null variables.

\begin{theorem}
  \label{Thm:ortho}
  Suppose $\frac{1}{n}\X'\X=\I$.  Then for any value of $\lam$,
  \as{\Ex\abs{\cS \cap \cN} =
    2\abs{\cN}\Phi(-\lam\sqrt{n}/\sigma),}
  where $\cS=\{j:\bh_j \neq 0\}$ is the set of selected variables and
  $\cN=\{j:\beta_j=0\}$ is the set of null variables.
\end{theorem}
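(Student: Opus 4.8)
The plan is to turn the expected count into a sum of single-feature selection probabilities and then invoke the orthonormal calculation already established in \eqref{eq:partial}. Writing the overlap as a sum of indicators,
\[
\abs{\cS \cap \cN} = \sum_{j \in \cN} \one(\bh_j \neq 0),
\]
linearity of expectation gives $\Ex\abs{\cS \cap \cN} = \sum_{j \in \cN} \Pr(\bh_j \neq 0)$. The attraction of this route is that linearity holds irrespective of how the selection events for different features relate to one another, so no independence or joint-distribution argument is required.

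The second step is to evaluate each summand for a null feature. For $j \in \cN$ we have $\beta_j = 0$, and the KKT characterization \eqref{eq:kkt} identifies the selection event $\{\bh_j \neq 0\}$ with $\{\tfrac{1}{n}\abs{\x_j'\r_j} > \lam\}$. Orthonormality is exactly what makes the latter tractable: in \eqref{eq:partial} the cross term involving $\tfrac{1}{n}\x_j'\Xj$ vanishes and $\tfrac{1}{n}\x_j'\x_j = 1$, so $\tfrac{1}{n}\x_j'\r_j = \tfrac{1}{n}\x_j'\bvep + \beta_j \sim \Norm(\beta_j,\sigma^2/n)$. Setting $\beta_j = 0$ and computing the two-sided Gaussian tail yields $\Pr(\bh_j \neq 0) = 2\Phi(-\lam\sqrt{n}/\sigma)$ for every $j \in \cN$. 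Since this value is the same for all null features, summing the $\abs{\cN}$ identical terms gives $2\abs{\cN}\Phi(-\lam\sqrt{n}/\sigma)$, as claimed.

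I do not expect a serious obstacle, since the substantive work --- the KKT reduction and the orthonormal simplification of the partial residual --- is carried out before the theorem is stated. The one point I would make explicit is the boundary event $\{\tfrac{1}{n}\abs{\x_j'\r_j} = \lam\}$, where the KKT conditions do not by themselves pin down whether $\bh_j = 0$; but because $\tfrac{1}{n}\x_j'\r_j$ has a continuous (Gaussian) distribution this event has probability zero, so the equivalence $\{\bh_j \neq 0\} = \{\tfrac{1}{n}\abs{\x_j'\r_j} > \lam\}$ holds almost surely and the expectation is computed exactly.
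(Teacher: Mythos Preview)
Your proposal is correct and matches the paper's own proof, which states that the result ``follows directly from the above by summing $\Pr(\bh_j \neq 0)$ over the set of null variables.'' Your explicit handling of the measure-zero boundary event $\{\tfrac{1}{n}\abs{\x_j'\r_j} = \lam\}$ is a nice bit of extra care that the paper leaves implicit.
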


To use this as an estimate, the unknown quantities $\abs{\cN}$ and $\sigma^2$ must be estimated.  First, $\abs{\cN}$ can be replaced by $p$, using the total number of variables as an upper bound for the null variables.  The variance $\sigma^2$ can be estimated by
\as{\sh^2 = \frac{\r'\r}{n-\abs{\cS}}.}
Dividing the residual sum of squares by the degrees of freedom of the lasso \citep{Zou2007} is a simple approach for estimating the residual variance, but other possibilities exist \citep[e.g.,][]{Fan2012}.
This implies the following estimate for the expected number of false discoveries:
\al{eq:fd}{\widehat{\FD} = 2p\Phi(-\sqrt{n}\lam/\sh)}
and, as an estimate of the false discovery rate:
\al{eq:fdr}{\widehat{\FDR} = \frac{\widehat{\FD}}{\abs{\cS}}.}

This straightforward estimate of the false discovery rate is made possible by the orthonormality condition assumed in \eqref{eq:partial}.  When the features that comprise $\X$ are correlated, however, the distribution of $\tfrac{1}{n}\x_j'\r_j$ is considerably more complex, as is the proper definition of what is meant by a ``false discovery.''  Nevertheless, as the rest of this paper will show, with a suitable definition of false discovery, estimator \eqref{eq:fdr} can still be used to estimate false discovery rates for variable selection in non-orthonormal settings.

\subsection{Definition in the general case}
\label{Sec:defn-gen}

 Consider the causal diagram presented in Figure~\ref{Fig:diagram}.  In this situation, variable $A$ could never be considered a false discovery: it has a direct causal relationship with the outcome $Y$.  Likewise, if variable $N$ were selected, this would obviously count as a false discovery -- $N$ has no relationship, direct or indirect, to the outcome.

\begin{figure}[htb]
  \centering
  \includegraphics[width=0.2\linewidth]{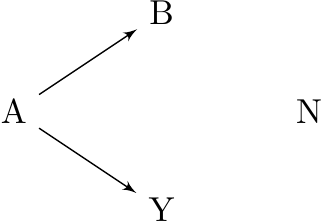}
  \caption{\label{Fig:diagram} Causal diagram depicting three types of features and their relationship to the outcome.}
\end{figure}

Variable $B$, however, occupies a gray area as far as false discoveries are concerned.
From a marginal perspective, $B$ would not be considered a false discovery, since it is not independent of $Y$.
However, $B$ and $Y$ are conditionally independent given $A$, so from a fully conditional regression perspective, $B$ is a false discovery.
Lastly, from a pathwise conditional perspective, $B$ may or may not be a false discovery, depending on whether $A$ is already included in the model or not at the point at which $B$ enters.

The central argument of this paper is that estimating the number of false selections arising from variables like $B$ is inherently complicated and requires complex approaches (either in the mathematical or computational sense); {\em however}, simple approaches like the one derived in Section~\ref{Sec:ortho} may still be used to estimate the number of false selections arising from variables like $N$.

Here, I define a {\em noise feature} to be a variable like $N$, that has no causal path (direct or indirect) between it and the outcome, and the {\em marginal false discovery rate} as the proportion of selected features that are noise variables.  Again, this definition is consistent with how false discoveries are defined in univariate testing, but differs from the existing literature in regression modeling.

It is worth addressing a technical point here: it is possible for two variables to be conditionally dependent despite being marginally independent.  Such a relationship would appear in a directed acyclic graph as the following:

\begin{center}
\includegraphics[width=0.2\linewidth]{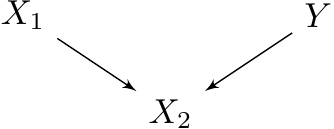}
\end{center}

\noindent As the figure indicates, such a relationship would require a feature (or features) to be caused by the outcome $Y$.
This is contrary to the usual framework in which regression models are applied, where $Y$ is (or is assumed to be) a consequence of the features used for prediction (e.g., a genetic variant can lead to heart disease, but heart disease will not change an individual's genetic sequence).
Thus, although we are actually making a stronger assumption than mere marginal independence here, in the typical regression setting the two are equivalent.
At any rate, although the term may be slightly imprecise from a technical perspective, we feel that the name ``marginal false discovery'' is easily understood and best distinguishes this quantity from false discoveries under the various conditional perspectives.

The proposed definition has several advantages.
First, when two variables (like $A$ and $B$ in Figure~\ref{Fig:diagram}) are correlated, it is often difficult to distinguish between which of them is driving changes in Y and which is merely correlated with Y.  As we will see, this causes methods using a conditional perspective to be conservative, especially in high dimensions.

Second, in many scientific applications, discovering variables like B in Figure~\ref{Fig:diagram} is not problematic.
For example, two genetic variants in close proximity to each other on a chromosome will be highly correlated.  Although it is obviously desirable to identify which of the two is the causal variant, locating a nearby variant is also an important scientific achievement, as it narrows the search to a small region of the genome for future follow-up studies.

The final advantage is clarity of interpretation.  Whether or not a feature would quality as a marginal false discovery depends only on the relationship between it and the outcome, not on whether another feature has been included in the model.
In contrast, interpreting the results from pathway-based tests such as those in \citet{Lockhart2014} and \citet{Tibshirani2016} can be challenging, as the definition of the null hypothesis is constantly changing with $\lam$.

\section{Independent noise features}

The orthonormal conditions of Section~\ref{Sec:ortho} clearly do not hold in most settings for which penalized regression is typically used.  Thankfully, they can be relaxed in two important ways that make the results more widely applicable.  First, the predictors do not have to be strictly orthogonal in order for the estimator to work; they can simply be uncorrelated.  Second, this condition of being uncorrelated applies only to the noise features -- i.e., the variables like $N$ in Figure~\ref{Fig:diagram}; variables like $A$ and $B$ can have any correlation structure.  These statements are justified theoretically in Section~\ref{Sec:ind-theory} and via simulation in \ref{Sec:ind-sim}.

\subsection{Theory}
\label{Sec:ind-theory}

To make these statements concrete, let $\cA, \cN$ partition $\{1,2,\ldots,p\}$ such that $\beta_j=0$ for all $j \in \cN$ and the following condition holds:
\as{\lim_{n \to \infty}\frac{1}{n}\X'\X = \left[ \begin{array}{cc}
      \Sigma_\cA & \zero\\
      \zero & \Sigma_\cN\end{array} \right].}
Under this definition, the opening remarks of this section can be stated precisely in the following theorem.

\begin{theorem}
  \label{Thm:main}
  Suppose $\Sigma_\cN=\I$.  Then for any $j \in \cN$ and for $\lam_n$ such that the sequence $\sqrt{n}\lam_n$ is bounded,
  \as{\frac{1}{\sqrt{n}}\x_j'\r_j \inD N(0, \sigma^2).}
\end{theorem}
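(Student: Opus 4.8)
The plan is to reduce $\tfrac{1}{\sqrt n}\x_j'\r_j$ to a single Gaussian term plus an asymptotically negligible remainder. Writing the true model as $\y = \x_j\beta_j + \Xj\bbj + \bvep$ and substituting into $\r_j = \y - \Xj\bbhj$ gives the exact identity $\r_j = \x_j\beta_j + \bvep + \Xj(\bbj - \bbhj)$. Scaling by $n^{-1/2}$ and using the standardization $\tfrac1n\x_j'\x_j = 1$, I would obtain
\[
\frac{1}{\sqrt n}\x_j'\r_j = \sqrt n\,\beta_j + \frac{1}{\sqrt n}\x_j'\bvep + \Big(\tfrac1n\x_j'\Xj\Big)\sqrt n\,(\bbj - \bbhj).
\]
Because $j \in \cN$ we have $\beta_j = 0$, so the first term vanishes, leaving a pure noise term and a cross term to control.

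The noise term $\tfrac{1}{\sqrt n}\x_j'\bvep$ is, for a fixed design with Gaussian errors, exactly distributed as $N(0,\sigma^2\,\tfrac1n\x_j'\x_j) = N(0,\sigma^2)$ for every $n$, so it already carries the target limit; under a random design or non-Gaussian errors a Lindeberg CLT would deliver the same conclusion. Everything therefore comes down to showing that the cross term $\big(\tfrac1n\x_j'\Xj\big)\sqrt n(\bbj - \bbhj)$ is $o_P(1)$, after which Slutsky's theorem finishes the argument.

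For the cross term I would combine two ingredients. First, since the limiting Gram matrix is block diagonal with $\Sigma_\cN = \I$, the row of $\tfrac1n\X'\X$ indexed by $j \in \cN$ converges to $\e_j$, so its restriction to the off-diagonal coordinates gives $\tfrac1n\x_j'\Xj \to \zero$. Second, $\bbhj$ is the lasso solution with the $j$th coordinate deleted, and under a positive-definite limit $\tfrac1n\X'\X \to \Sigma$ together with the stated scaling $\sqrt n\,\lam_n = O(1)$, the classical $\sqrt n$-asymptotics for lasso-type estimators (Knight and Fu, 2000) give $\sqrt n(\bbj - \bbhj) = O_P(1)$. Since $p$ is fixed (the hypothesis posits a fixed limiting matrix), the cross term is then a finite sum of products of the form $o(1)\cdot O_P(1)$, hence $o_P(1)$.

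The main obstacle is this second ingredient, the $\sqrt n$-consistency of the lasso. The delicate point is matching the normalization: because the objective \eqref{eq:lasso} divides the loss by $2n$, one must verify that the hypothesis ``$\sqrt n\,\lam_n$ bounded'' lines up exactly with the Knight--Fu requirement that the per-observation penalty, rescaled by $\sqrt n$, converge, and that the block-diagonal limit is nonsingular so that the limiting objective in their argmin characterization is strictly convex and its minimizer is $O_P(1)$. Note that only boundedness in probability is needed, not the precise limiting law of $\sqrt n(\bbj-\bbhj)$. Once $\sqrt n(\bbj - \bbhj) = O_P(1)$ is in hand, the exact Gaussianity of the noise term and the vanishing of $\tfrac1n\x_j'\Xj$ are immediate from the hypotheses, and the result follows.
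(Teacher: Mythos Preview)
Your proposal is correct and follows essentially the same route as the paper: the identical decomposition $\tfrac{1}{\sqrt n}\x_j'\r_j = \tfrac{1}{\sqrt n}\x_j'\bvep + (\tfrac1n\x_j'\Xj)\sqrt n(\bbj-\bbhj)$, the same reasoning that the first term is exactly $N(0,\sigma^2)$, that $\tfrac1n\x_j'\Xj\to\zero$ from the block-diagonal limit with $\Sigma_\cN=\I$, and that $\sqrt n(\bbj-\bbhj)=O_P(1)$ under bounded $\sqrt n\,\lam_n$, followed by Slutsky's theorem. The only cosmetic difference is that the paper cites \citet{Fan2001} rather than Knight and Fu for the $\sqrt n$-consistency step.
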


Theorem~\ref{Thm:main} shows that if the noise features are uncorrelated, then in the limit $\tfrac{1}{n}\x_j'\r_j$ behaves just as it did in Section~\ref{Sec:ortho} (equation \ref{eq:partial}).
Thus, estimators \eqref{eq:fd} and \eqref{eq:fdr} still provide approximate estimators for the marginal false discovery rate in this setting, at an accuracy that improves with the sample size.
The technical condition requiring $\sqrt{n}\lam_n$ to be bounded is only necessary so that the estimate $\bbh$ will be $\sqrt{n}$-consistent.  Without it (i.e., for large values of $\lam$), $\tfrac{1}{n}\x_j'\r_j$ will converge to a random variable with a variance larger than $\sigma^2$ due to underfitting.  Note that Theorem~\ref{Thm:main} treats $p$ as fixed; extending this result to allow $p \to \infty$ would be of interest as future work, but lies outside the scope of this paper.

\subsection{Simulation}
\label{Sec:ind-sim}

To illustrate the consequences of Theorem~\ref{Thm:main}, let us carry out the following simulation study, with both a ``low-dimensional'' ($n > p$) and ``high-dimensional'' ($n < p$) component.  Motivated by Figure~\ref{Fig:diagram}, three types of features will be included:
\begin{itemize}
\item Causative: Six variables with $\beta_j =1$
\item Correlated: Each causative feature is correlated ($\rho=0.5$) with $m$ other features; $m=2$ for the low-dimensional case and $9$ for the high-dimensional case
\item Noise: Independent noise features are added to bring the total number of variables up to 60 in the low-dimensional case and 600 in the high-dimensional case
\end{itemize}
The causative, correlated, and noise features correspond to variables A, B, and N, respectively, in Figure~\ref{Fig:diagram}.  In each setting, the sample size was $n=100$, while the total number of causative/correlated/noise features was 6/12/42 for the low-dimensional setting and 6/54/540 for the high-dimensional setting.  The results of the simulation are shown in Figure~\ref{Fig:sim-ind}.

\begin{figure}[ht!]
  \centering
  \includegraphics[width=0.75\linewidth]{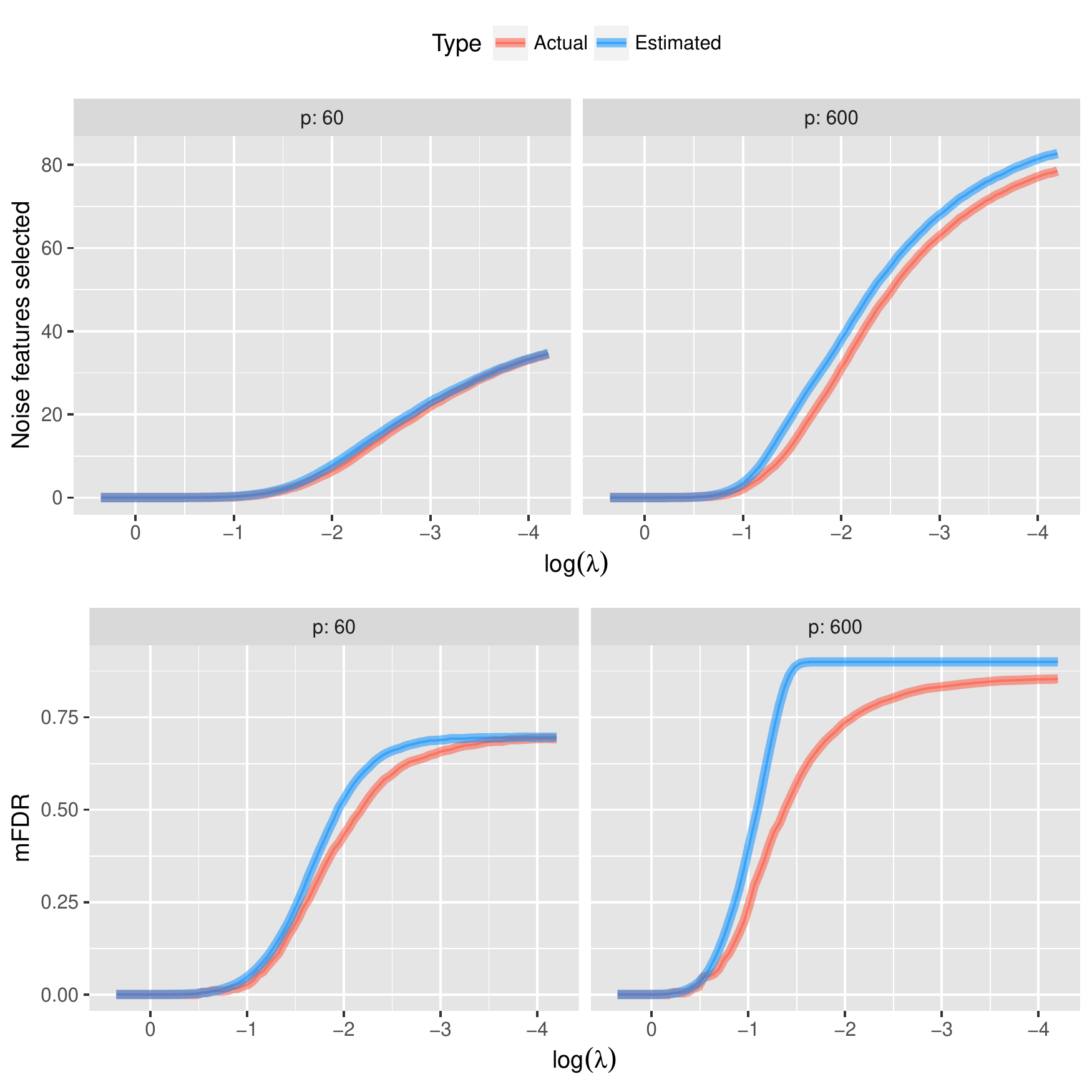}
  \caption{\label{Fig:sim-ind} Accuracy of estimators \eqref{eq:fd} and \eqref{eq:fdr} in the case of independent noise features.}
\end{figure}

As Theorem~\ref{Thm:main} implies, estimators \eqref{eq:fd} and \eqref{eq:fdr} are quite accurate, on average, when the noise features are independent.  The estimated number of noise features selected and the marginal false discovery rate (mFDR) are both somewhat conservative, as we would expect from using $p$ as an upper bound for the number of noise features (e.g., in the high-dimensional case, $p=600$ but $\abs{\cN}=540$).  However, the effect is slight in this setting.  For example, in the high-dimensional case at $\lam=0.55$, the actual inclusion rate for noise variables was 5\%, while the estimated rate was 6.5\%.

Being able to estimate the mFDR means we can use it to select the regularization parameter $\lam$.  For example, we could choose $\lam$ to be the smallest value of $\lam$ such that $\widehat{\mFDR}(\lam) < 0.1$.  Figure~\ref{Fig:sim-power} compares this approach with several other methods for selecting $\lam$ in terms of the number of each type of feature the method selects on average.  For Lasso (mFDR), univariate testing (i.e., marginal regression), sample splitting \citep[using the {\tt hdi} package,][]{Dezeure2015}, and the exact conditional path-based test \citep[using {\tt larInf} and {\tt forwardStop} from the {\tt selectiveInference} package,][]{Tibshirani2016,GSell2016}, the nominal false discovery rates were set to 10\%.  For cross-validation, the value of $\lam$ minimizing the cross-validation error was selected.

\begin{figure}
  \centering
  \includegraphics[width=0.75\linewidth]{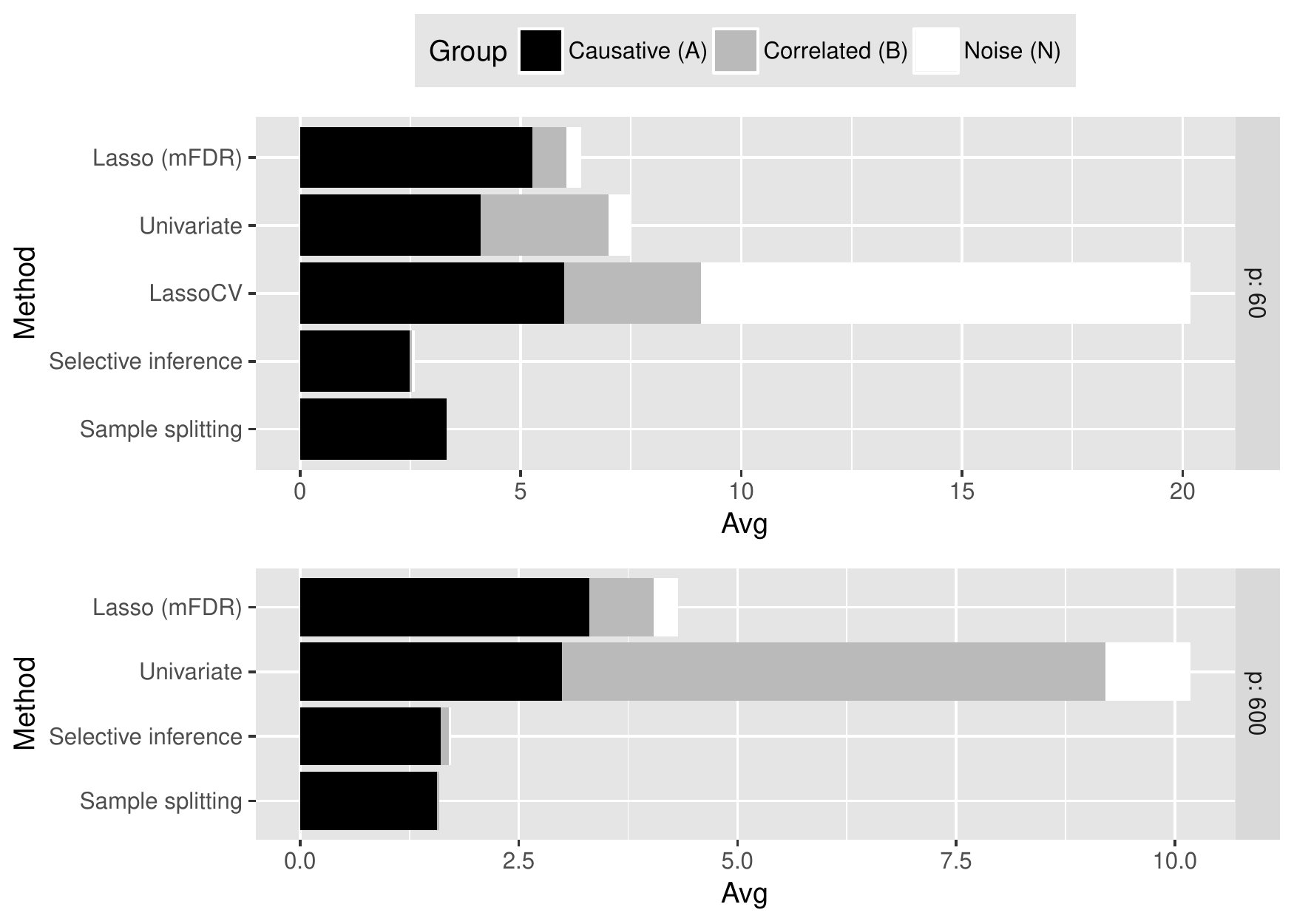}
  \caption{\label{Fig:sim-power} Average number of each type of feature selected by various methods for the simulation setup in Section~\ref{Sec:ind-sim}.  Cross-validation is omitted from the high-dimensional ($p=600$) plot due to the very large number of noise features it selects, which dominates the plot when included.}
\end{figure}

It is worth noting that both Lasso (mFDR) and univariate testing limit the fraction of selections due to noise features (to 5\% and 7\%, respectively, in the $p=60$ simulation) to the nominal rate, but claim nothing about the fraction arising from correlated features.
This is obvious from the figure for univariate testing; for Lasso (mFDR), the fraction of selected features coming from either the Correlated or Noise groups (i.e., all features with $\beta_j=0$) was 17\% in the low-dimensional setting and 23\% in the high-dimensional setting.
Nevertheless, compared to univariate testing, the Lasso (mFDR) approach has two distinct advantages.
Figure~\ref{Fig:sim-power} shows that using a penalized regression approach both diminishes the number of merely correlated features selected and improves power to detect the truly causative features.

Among the penalized regression approaches, cross-validation is a considerable outlier, with no protection against the selection of large numbers of noise features.  In the high-dimensional case, cross-validation selected over 30 noise features on average, and is omitted from the plot so as not to obscure the performance of the other methods.

The sample splitting and selective inference approaches, on the other hand, greatly limit the number of both noise features and correlated features selected by the lasso model.  The more restrictive definition of a false discovery used by these approaches, however, causes them to behave quite conservatively, and have considerably less power to detect the causative features than any of the other methods considered.

\section{Correlated noise features}
\label{Sec:cor}

The simulation results of Section~\ref{Sec:ind-sim} are something of a best case scenario for the proposed method, since the variables in $\cN$ were independent and Theorem~\ref{Thm:main} establishes that the estimator is consistent in this case.  As we will see, when noise features are correlated, estimator \ref{eq:fdr} becomes conservative.  The case of correlated noise features is significantly less mathematically tractable, making it harder to construct a rigorous proof of this claim, but here we offer some insight into why this is true and investigate the performance of the proposed method via simulation.

To illustrate why the proposed mFDR estimator becomes conservative when noise features are correlated, let us consider the $p=2$ case.
Letting $z_j = \tfrac{1}{n}\x_j'\y$, we begin by noting that $\z$ follows a multivariate normal distribution with mean $\zero$ and
\as{\var(\z) = \frac{\sigma^2}{n}\left[ \begin{array}{cc} 1 & \rho \\
      \rho & 1 \end{array} \right],}
where $\rho$ denotes the correlation between $\x_1$ and $\x_2$, and
\al{eq:conj-right-side}{2\abs{\cN}\Phi(-\lam\sqrt{n}/\sigma) = \Pr(\abs{z_1}>\lam) + \Pr(\abs{z_2}>\lam).}
A visual representation of this quantity is given in Figure~\ref{Fig:conjecture}.

The shading represents whether 0 (white), 1 (light gray), or 2 (dark gray) features will be selected under this orthogonal approximation; the quantity in \eqref{eq:conj-right-side} can be found by integrating these constant regions with respect to the joint density of $\z$.  

\begin{figure}[ht!]
  \centering
  \includegraphics[width=0.7\linewidth]{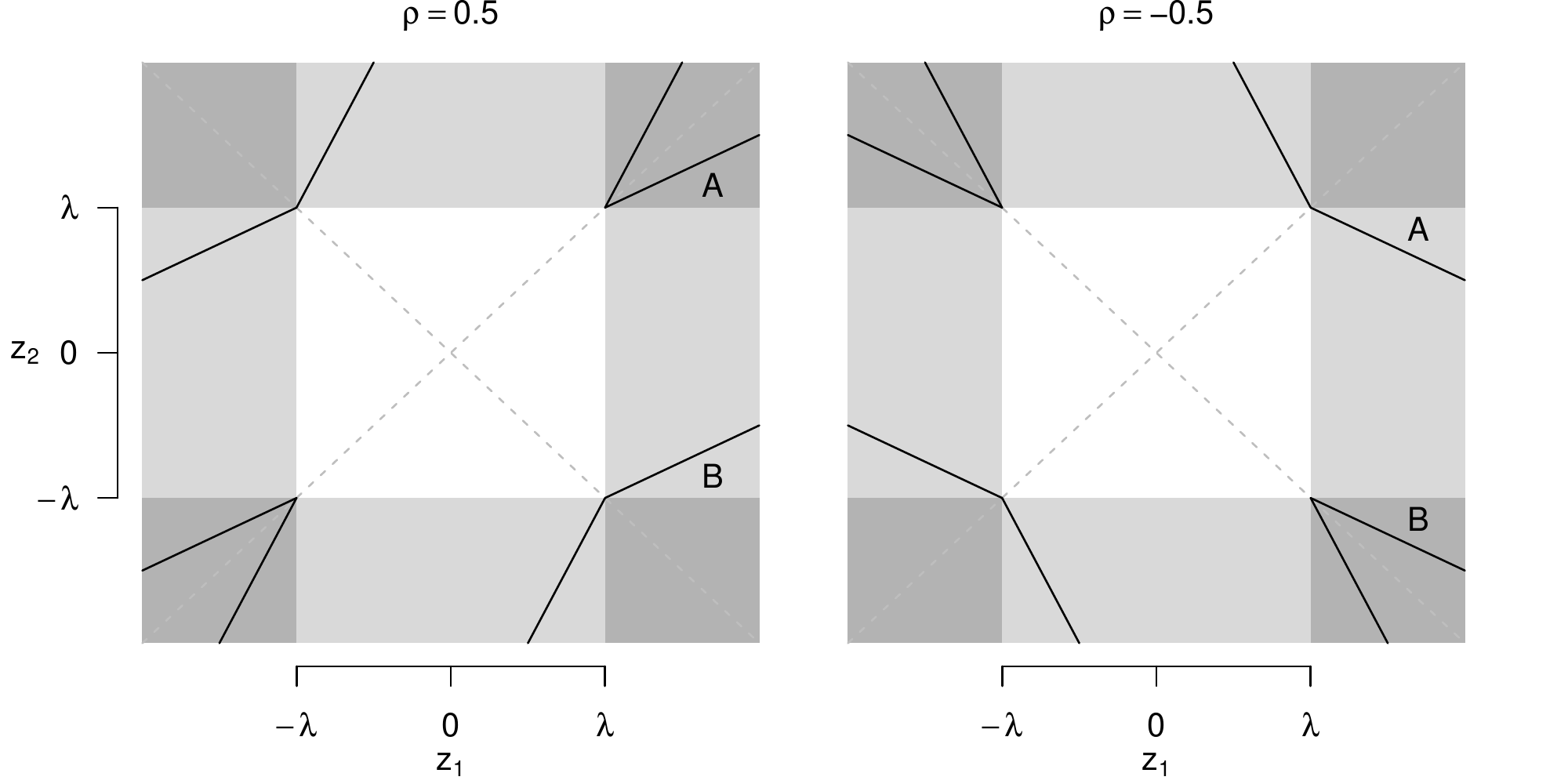}
  \caption{\label{Fig:conjecture} Illustration of how correlation affects selection in the bivariate case.  Features $\x_1$ and $\x_2$ are positively correlated ($\rho=0.5$) on the left and negatively correlated ($\rho=-0.5$) on the right.}
\end{figure}

In the case where $\x_1$ and $\x_2$ are non-orthogonal, the white region (where no features are selected) remains the same, but the conditions under which both features are selected (the light and dark gray regions) change.
By working through the KKT conditions for the bivariate case under various conditions, we can determine these selection boundaries.
For example, the boundary for $\bh_2 > 0$ given that $\bh_1 > 0$ (i.e., given that $z_1 > \lam$) is
\as{\tfrac{1}{n}\abs{\x_2'\r_2} > \lam
  &\implies z_2 - \rho\bh_1 > \lam\\
  &\implies z_2 > \rho z_1 + \lam(1-\rho).}
These boundaries are drawn in black on Figure~\ref{Fig:conjecture}; the preceding equation is the line just above the letter ``A''.
As the figure illustrates, when $\rho > 0$ the region over which $\beta_1, \beta_2 \neq 0$ narrows in the upper right and lower left quadrants, and widens in the other two quadrants.
Considering the difference $\Ex\abs{\cS \cap \cN} - \{\Pr(\abs{z_1}>\lam) + \Pr(\abs{z_2}>\lam)\}$, we see that in many regions, the terms cancel out by integrating the same quantity over the same distribution.
The differences lie in four pairs of triangular regions; one such pair is labeled ``A'' and ``B'' in the figure.
The region A is where two variables are selected under orthogonality, but only one in the correlated case, while region B is where one variable is selected under orthogonality, but two are selected in the correlated case.
However, because $z_1$ and $z_2$ are positively correlated in this scenario, the density at each point in A is higher than its corresponding point in B, and therefore estimator \eqref{eq:fd} is conservative in the sense that it overestimates the expected number of false discoveries.  The opposite scenario happens for $\rho < 0$, where region B now has the higher probability density, which again leads to a larger integral under orthogonality, with equality between the two quantities occurring only at $\rho=0$.

In terms of Theorem~\ref{Thm:main}, this argument implies that when $\Sigma_\cN \neq \I$, the quantity $\tfrac{1}{\sqrt{n}}\x_j'\r_j$ converges to a distribution with thinner tails than $\Norm(0, \sigma^2)$.
Intuitively, this makes sense: if two noise features are correlated, a regression-based method such as the lasso will tend to select a single feature rather than both.
Thus, the uncorrelated case is not just mathematically convenient, it also represents a worst case scenario with respect to the number of noise features that we can expect to be selected by chance alone.

To investigate the robustness of the proposed mFDR estimator in the presence of moderate correlation, let us carry out the following simulation.  The generating model contains 6 independent causative features and 494 correlated noise features, with a 1:1 signal-to-noise ratio ($n=100$, $p=500$, $R^2=0.5$).  The noise features are given an autoregressive correlation structure with $\cor(\x_j,\x_k) = 0.8^{\abs{j-k}}$.  The results of the simulation are shown in Figure~\ref{Fig:sim-corr-auto}.

\begin{figure}[ht!]
  \centering
  \includegraphics[width=0.8\linewidth]{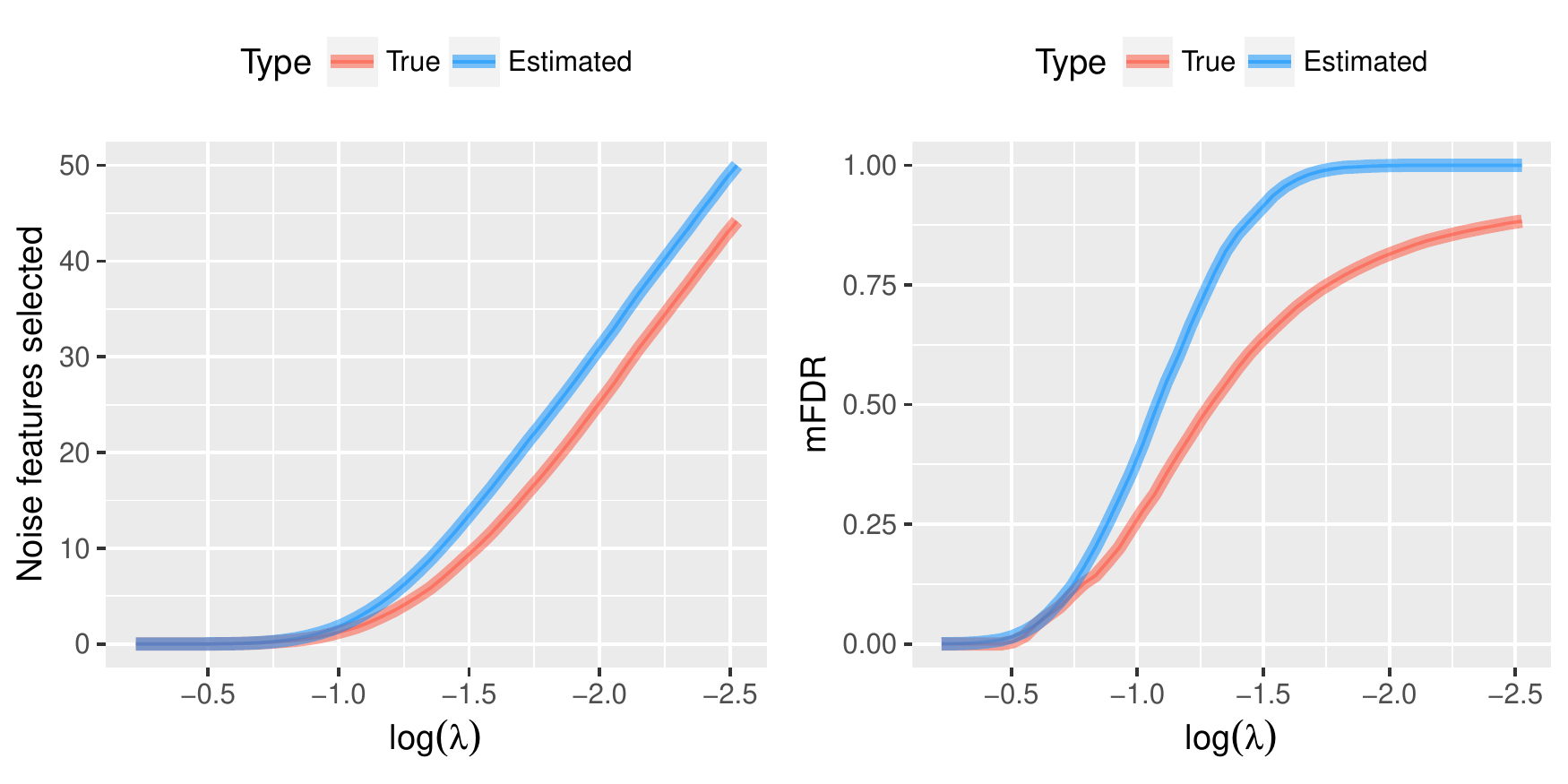}
  \caption{\label{Fig:sim-corr-auto} Accuracy of estimators \eqref{eq:fd} and \eqref{eq:fdr} in the case of (autoregressive) correlated noise features.}
\end{figure}

Compared to Figure~\ref{Fig:sim-ind}, the mFDR estimates are somewhat more conservative in this case, although still quite accurate -- certainly accurate enough to be useful in practice.  For example, at $\lam=0.43$, the true inclusion rate for noise variables was 14\%, while the estimated rate according to \eqref{eq:fd} was 20\%.

This simulation illustrates that although its derivation is based on independent noise features, the proposed mFDR estimator is reasonably robust to the presence of correlation.  Furthermore, to the extent that it is inaccurate, it provides a conservative estimate of the marginal false discovery rate, suggesting (as the conjecture indicates) that the approach provides control over the mFDR, at least on average.

\section{Permutation approach}
\label{Sec:perm}

As correlation between noise features becomes more extreme, the proposed estimator becomes increasingly conservative.  As an extreme example, suppose that the correlation structure from Section~\ref{Sec:cor} was exchangeable instead of autoregressive: $\cor(\x_j,\x_k) = 0.8$ for all $j$, $k$.  The results of this modification to the simulation (all other aspects remaining the same) are shown in Figure~\ref{Fig:sim-corr-ex}.

As one might expect, the estimates are far more conservative in this case.  For example, at $\lam=0.43$, the estimated mFDR is 23\% even though the true mFDR is only 1\%.  This is a consequence of the independence approximation, where \eqref{eq:fdr} operates under the simplifying assumption that the selection of one noise feature does not affect the probability of other noise features being selected.  This is reasonably accurate in most cases, but not in this situation, where all noise features are highly correlated with each other.  As a result, the lasso tends to select only a single noise feature from this highly correlated set, while the independence-based estimate \eqref{eq:fd} indicates that it has likely selected, say, 7 or 8.

This phenomenon is not unique to penalized regression; substantial correlation among features causes problems with conventional false discovery rates as well \citep{Efron2007b}.  One widely used approach for controlling error rates while preserving correlation structures is to use a permutation approach \citep{Westfall1993}, and a similar strategy may be applied in order to calculate marginal false discovery rates for penalized regression as well.  The primary advantage of this approach is that it eliminates the conservatism of the analytic approach developed in Section~\ref{Sec:fir}, while the primary disadvantage is a greatly increased computational burden.  In this section, I describe two permutation-based methods and apply them to the simulation shown in Figure~\ref{Fig:sim-corr-ex}.

\begin{figure}[ht!]
  \centering
  \includegraphics[width=0.5\linewidth]{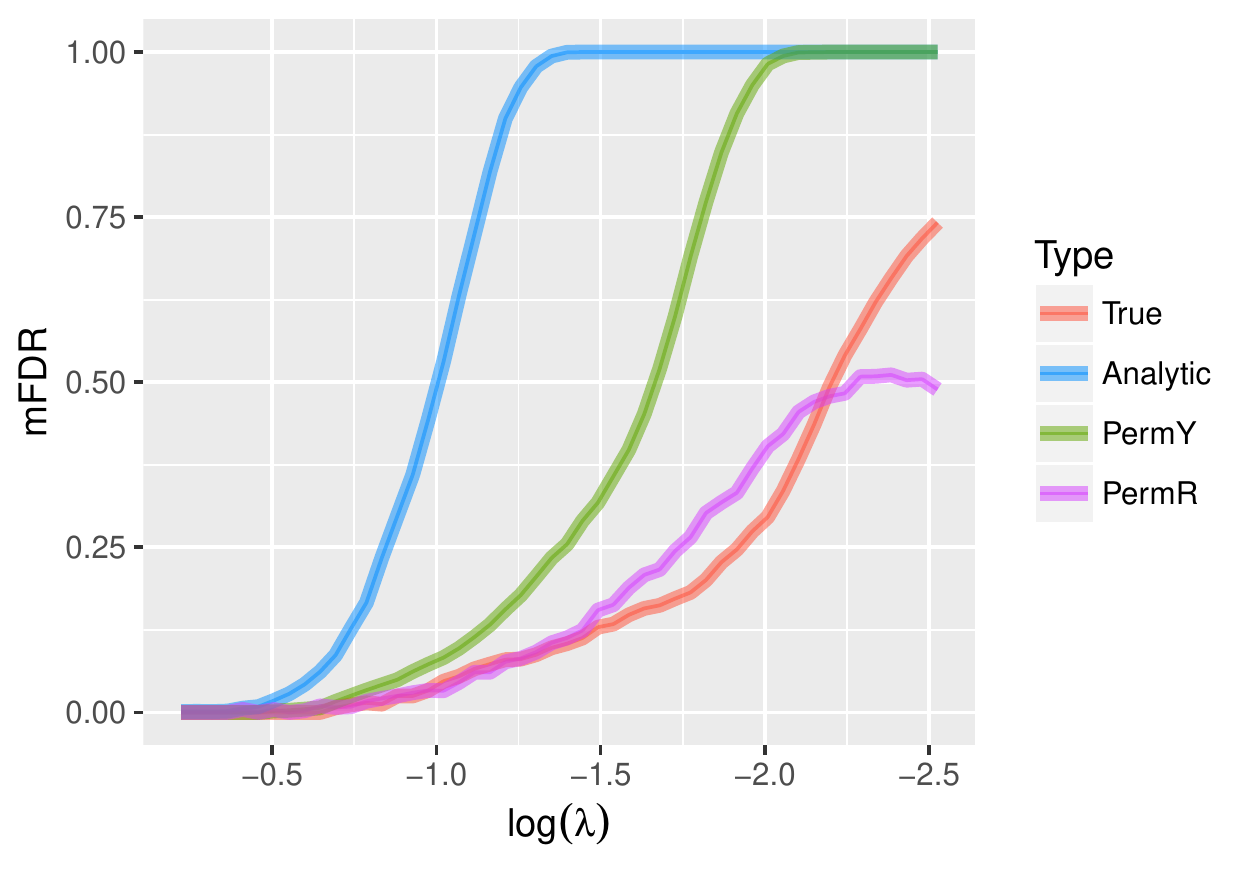}
  \caption{\label{Fig:sim-corr-ex} Accuracy of the analytic \eqref{eq:fdr}, permute-the-outcome (``PermY'', \eqref{eq:permy}), and permute-the-residuals (``PermR'') estimators in the case of (exchangeable) correlated noise features.}
\end{figure}

\subsection{Permuting the outcome}
\label{Sec:permy}

The simplest approach is simply to randomly permute the outcome $\y$, creating new outcomes $\yy^{(b)}$ for $b=1,2,\ldots,B$.  Then, for each permutation $b$, solve for the lasso path $\tbb^{(b)}(\lam;\X,\yy^{(b)})$, estimate the average number of noise features included in the model for a given value of $\lam$
\as{\widehat{\FD}(\lam) = \frac{\sum_b \#\{\tb_j^{(b)}(\lam) \neq 0\}}{B},}
and the marginal false discovery rate using
\al{eq:permy}{\widehat{\mFDR}_{Y}(\lam) = \frac{\widehat{\FD}(\lam)}{S(\lam)},}
where $S(\lam)$ is the number of variables selected by the lasso using the original (i.e., not permuted) data.

This method is applied to the case of heavy exchangeable correlation in Figure~\ref{Fig:sim-corr-ex}.  The method is considerably less conservative than the analytic approach, although still conservative for reasons that will be discussed in Section~\ref{Sec:permr}.  For example, at $\lam=0.43$, the average estimated mFDR is 4\%, where the true value was 1\% and the analytic approach yielded 23\%.

By permuting $Y$, this approach constructs realizations of the data in which all features belong to $\cN$, the set of noise features.  Like \eqref{eq:fdr}, it limits the number of noise features selected but cannot be used to control the number of false discoveries in the sense of limiting selections from all variables with $\beta_j=0$.  In the hypothesis testing literature, this is referred to as ``weak control'' over the error rate.

\subsection{Permuting the residuals}
\label{Sec:permr}

As seen in Figure~\ref{Fig:sim-corr-ex}, permuting the outcomes is still conservative in its estimation of mFDR.  The primary reason for this is that, provided that at least some features are {\em not} noise, the variance of $Y$ can be separated into signal and noise; ideally we would permute only the noise, but by permuting the outcome we permute the signal as well.  This has the effect, in a sense, of overestimating the noise present in the model; this is essentially the same phenomenon that occurs in Theorem~\ref{Thm:main} if the sequence $\sqrt{n}\lam_n$ is not bounded.

One alternative is, rather than permuting the outcome, to permute the residuals, $\r(\lam)=\y - \X\bbh(\lam)$, of the original lasso fit.  The method is otherwise identical to that described in Section~\ref{Sec:permy}, with the notable exception that the residuals, unlike the outcomes, depend on $\lam$ and thus, $B$ separate lasso solutions $\tbb^{(b)}(\lam;\X,\rr^{(b)})$ must be calculated at each value of $\lam$ (in Section~\ref{Sec:permy}, the same solutions could be used for all values of $\lam$), substantially increasing the computational burden.

Nevertheless, this increased computational cost does offer a benefit, as seen in Figure~\ref{Fig:sim-corr-ex}.  Unlike the analytic and permute-the-outcome approaches, permuting the residuals provides an essentially unbiased estimate of the true false inclusion rate except at very small $\lam$.  For example, at $\lam=0.29$, the average estimated mFDR is 8\%, as was the true mFDR, whereas the permute-the-outcome and analytic methods estimated mFDRs of 16\% and 90\%, respectively.

The purpose of this manuscript is primarily to investigate the analytic approach to calculating false inclusion rates outlined in Section~\ref{Sec:fir}, with relatively less emphasis on the permutation approaches.  In the opinion of the author, the analytic approach is almost always useful, since it can be calculated instantly.  Whether one wishes to take the time to carry out the permutation approach, however, depends on context: how correlated the features are, how problematic a somewhat conservative estimate of mFDR is, and how far along in the analytic process one is (initial exploration or final publication results).  For a more detailed comparison of the analytic and permutation approaches in the context of genetic association studies, see \citet{Yi2015}.

\section{Case studies}
\label{Sec:case}

\subsection{Breast cancer gene expression study}

As a case study in applying the proposed method to real data, we will analyze data on gene expression in breast cancer patients from The Cancer Genome Atlas (TCGA) project, available at \url{http://cancergenome.nih.gov}.
In this dataset, expression measurements of 17,814 genes, including BRCA1, from 536 patients are recorded on the log scale.

BRCA1 is a well-studied tumor suppressor gene with a strong relationship to breast cancer risk.
Because BRCA1 is likely to interact with many other genes, including tumor suppressors and regulators of the cell division cycle, it is of interest to find genes with expression levels related to that of BRCA1.
These genes may be functionally related to BRCA1 and are useful candidates for further studies.

\begin{figure}[ht!]
  \centering
  \includegraphics[width=0.6\linewidth]{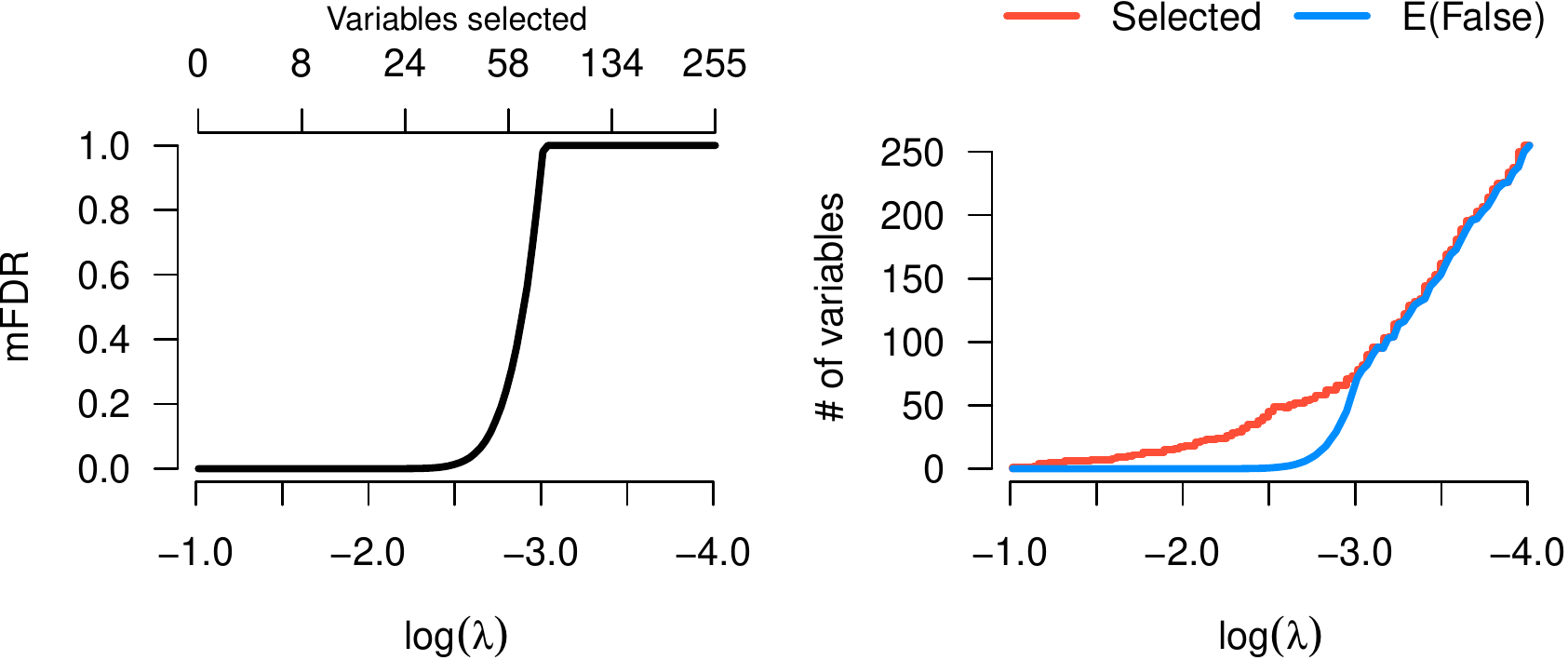}
  \caption{\label{Fig:tcga} False inclusion rate estimates for a lasso model applied to the breast cancer TCGA data.}
\end{figure}

For this analysis, 491 genes with missing data were excluded, resulting in a design matrix with $p=17,322$ predictors.
The resulting mFDR estimates for the lasso solution path are presented in Figure~\ref{Fig:tcga}.
Here, the mFDR estimates indicate that many genes are predictive of BRCA1 expression -- we can safely select 55 variables before the false inclusion rate exceeds 10\%.
This makes sense scientifically, as a large number of genes are known to affect BRCA1 expression through a variety of mechanisms, and the sample size here is sufficient that we should be able to identify many of them.
For the sake of comparison, the univariate approach of separately testing each feature identifies 7,903 genes that have a significant correlation with BRCA1 at a false discovery rate of 10\%.

In contrast, the selective inference and sample-splitting procedures each select just a single feature.
Although the mFDR estimates are slightly conservative as discussed in Section~\ref{Sec:cor}, this matters far less in practice than whether one adopts a marginal or conditional perspective with respect to false discoveries.
This example clearly illustrates how conservative the conditional perspective is in practice, especially with high-dimensional data.

The mFDR approach is also vastly more convenient than sampling splitting or covariance testing from the standpoint of computational burden.  Calculating the mFDR is essentially instantaneous after fitting the lasso path; the entire analysis requires just 1.3 seconds.  Meanwhile, sample splitting required 9 minutes, and applying the pathwise conditional test with the forwardStop rule using the {\tt selectiveInference} package required 1.5 hours.

Interestingly, although sample splitting and pathwise conditional testing each identified a single feature, they did not select the same feature.  Sample splitting selected the gene NBR2, which is unquestionably associated with BRCA1 expression -- the two genes are adjacent to one another on chromosome 17 and share a promoter \citep{Di2010}.  However, NBR2 is the third gene to enter the lasso model.  The forwardStop rule based on pathwise conditional testing recommends stopping after the first variable is added to the model, and thus fails to identify NBR2.

Finally, it is worth comparing these results to the selection of $\lam$ by cross-validation (CV).
For the TCGA data, $\lam=0.0436$ minimizes the CV error.
The estimated mFDR at this value, however, is 77\%, indicating that although this value of $\lam$ may be attractive from a prediction perspective, we cannot be confident that the variables selected by the model are truly related to the outcome.
Indeed, it has long been recognized from a theoretical perspective that while the lasso has attractive variable selection and prediction properties, it cannot achieve both those aims simultaneously \citep{Fan2001}.
The mFDR estimates illustrate this concretely: $\lam=0.0436$ produces accurate predictions, but a larger value, $\lam=0.0627$, is required in order to have confidence that no more than 10\% of the variables selected are false discoveries.

\subsection{Minimax concave penalty}

The mFDR estimator follows directly from the KKT conditions for a given penalty, and it is straightforward to extend to other penalties.
In fact, the KKT conditions for many penalties lead to the same expression \eqref{eq:kkt} and therefore the same mFDR estimator (although as a technical note, for nonconvex optimization problems these conditions are not usually referred to as KKT conditions; they are still necessary but no longer sufficient).

One such penalty is the minimax concave penalty, or MCP \citep{Zhang2010}.
Briefly, the MCP produces sparse estimates like the lasso, but modifies the penalty such that the selected variables are estimated with less shrinkage towards zero (see the original paper for details).
The main consequence of this is that MCP solutions tend to be more sparse than those of the lasso, in that they allow features to have larger regression coefficients, requiring fewer features to achieve the same predictive ability.
To put it differently, for the lasso to estimate large coefficients accurately, it must lower $\lam$, thereby increasing the number of noise features in the model and increasing the mFDR.
MCP, on the other hand, can estimate large coefficients accurately while still screening out the noise features, at least asymptotically (this is known as the ``oracle property'').

To see how this works in practice, we can fit an MCP model to the TCGA data and compare its results to those of the lasso.
For the value of $\lam$ minimizing CV error, both methods had similar predictive accuracy (cross-validated $R^2=0.61$ for the lasso and 0.58 for MCP), but the MCP model used far fewer features (38 compared to 96 for the lasso).
Consequently, for these values of $\lam$ the estimated mFDR for MCP was much lower than that of the lasso: 5\% compared to the lasso's 77\%.
If we restrict the lasso mFDR to 5\% by increasing $\lam$, its predictive accuracy falls to $R^2=0.57$.

This is representative of the relationship between lasso and MCP: the lasso can typically achieve slightly better prediction accuracy, but in doing so selects a large number of noise features.
This pattern has been observed in simulation studies \citep[e.g.,][]{Breheny2011}, but mFDR estimates offer a way to observe and assess this tradeoff in the analysis of real data.

This example also illustrates the ease with which the proposed estimator can be extended to other penalties.
In addition to MCP, the SCAD \citep{Fan2001} and elastic net \citep{Zou2005} also have similar KKT conditions leading to \eqref{eq:kkt} and thus require only trivial modifications to the mFDR estimator.

\subsection{Genetic association study of cardiac fibrosis}
\label{Sec:magnet}

As an additional example, let us also look at a genetic association study of cardiac fibrosis.  The data come from the Myocardial Applied Genomics Network (MAGNet), which collected tissue and gene expression data on 313 human hearts along with genetic data using the Affymetrix Genome-Wide Human SNP Array 6.0.  We are interested in the the ratio of cardiomyocytes to fibroblasts in the heart tissue.
An abundance of fibroblasts is indicative of cardiac fibrosis, which leads to heart failure.
In this analysis, the goal is to discover single nucleotide polymorphisms (SNPs) associated with increased fibrosis.
Here, we use the log of the cardiomyoctye:fibroblast ratio as the response, and SNPs with $<10\%$ missing data as the predictors, resulting in $p=660,496$ features.
{\tt mimimac2} was used to impute the missing genotypes that remained \citep{Fuchsberger2015}.
Neither sample splitting or covariance testing was attempted here due to the size of the data set.

In contrast to the TCGA gata, no features can be selected with any degree of confidence in the MAGNet data.
For all values of $\lam$ in which features are selected, the mFDR estimate is 100\%.
This is consistent with a traditional genome-wide association analysis, which also fails to identify any SNPs that are significant following a Bonferroni correction for multiple testing.

This negative example illustrates the usefulness of mFDR estimates in terms of guarding against false positives.
The efficiency with which methods like the lasso extend to high dimensions make analyses like this (with $n=313$ and $p=660,496$) feasible from a computational standpoint.
However, there are important statistical considerations here that need to be accounted for; namely, with $p$ so large, the probability of selecting noise features is so great that one cannot place any trust in the variables that the lasso selects.
These considerations are clearly illustrated by the estimation of mFDR, but are lost if one simply uses the lasso to identify the top $\abs{\cS}$ SNPs \citep[as in, e.g.,][]{Wu2009}.

\section{Discussion}

The false inclusion rate estimator presented in this article is a straightforward, useful way of quantifying the reliability of feature selection for penalized regression models such as the lasso.
Compared with other proposals such as sample splitting and the pathwise covariance and selective inference tests, mFDR uses a more relaxed definition of a false discovery, in which only variables marginally independent of the outcome are considered to be false discoveries.
The relaxed definition pursued here has several advantages in practice, particularly in high dimensions.
These advantages include greater power to detect features, no added computation cost, and a simple rate with a straightforward interpretation.
Although the estimate can be conservative for highly correlated features, in most realistic settings this conservatism is mild.

The proposed estimator is easy to implement and is available (along with the permutation methods of Section~\ref{Sec:perm}) in the {\tt R} package {\tt ncvreg} \citep{Breheny2011}, which was used to fit all of the models presented in this paper (except where the {\tt hdi} and {\tt selectiveInference} packages were used).
The following bit of code demonstrates its use:
\begin{code}
  fit <- ncvreg(X, y, penalty="lasso")
  obj <- fir(fit)
  plot(obj)
\end{code}
\noindent This will fit a lasso model, estimate the mFDR at each value of $\lam$ and assign it to an object, then plot the results, as in Figure~\ref{Fig:tcga}.

The simplicity of the method makes it available at no added computational cost and very easy to generalize to new methods: the {\tt fir} function in {\tt ncvreg} also works for elastic net, SCAD, MCP, and Mnet \citep{Huang2016} penalties.
As with any statistic, there are limitations and one needs to be careful with interpretation, but mFDR is a convenient summary measure that is helpful in the selection of $\lam$ for penalized regression models as well as a useful estimate for the reliability of feature selection in such models.

\section*{Acknowledgments}

I would like to thank Jian Huang for fruitful discussions of this idea when it was still in its early stages, Ina Hoeschele for discussions of the permutation approach, and Ryan Boudreau for the MAGNet data of Section~\ref{Sec:magnet}.

\section*{Appendix}

All of the code and data to reproduce the analyses in this manuscript, with the exception of the human genetic data considered in Section~\ref{Sec:magnet}, are available at \url{https://github.com/pbreheny/lassoFDRpaper}.

\begin{proof}[Proof of Theorem~\ref{Thm:main}]
  Starting with the observation that $\r_j = \X\bb + \bvep - \X_{-j}\bbhj$, for any $j \in \cN$ we have
  \as{\frac{1}{\sqrt{n}}\x_j'\r_j &= \frac{1}{\sqrt{n}}\x_j'\bvep +
    (\tfrac{1}{n}\x_j'\X_{-j})\{\sqrt{n}(\bbj - \bbhj)\}}
  The first term, $\frac{1}{\sqrt{n}}\x_j'\bvep$, follows a $N(0, \sigma^2)$ distribution by the independence of $\x_j$ and $\bvep$.  The second term, $\tfrac{1}{n}\x_j'\X_{-j}$, converges to zero by the assumption that $\Sigma_{\cN}=\I$ ($\x_j$ is also independent of variables in $\cA$ since $j \in \cN$).  Finally, the third term is bounded in probability provided that $\sqrt{n}\lam_n$ is bounded \citep{Fan2001}.  Therefore, by Slutsky's Theorem, the entire quantity converges in distribution to $N(0, \sigma^2)$.
\end{proof}

\bibliographystyle{ims-nourl}

\end{document}